\theoremstyle{definition}
\newtheorem{definition}{Definition}
\newtheorem{proposition}{Proposition}
\newcounter{example}
\newcommand{\Cat}[1]{\ensuremath{{\bf #1}}}% category
\newcommand{\catD}{\ensuremath{\Cat{D}}}% category of syntax diagrams
\newcommand{\catDG}{\ensuremath{\Cat{D_G}}}% category of correct syntax diagrams
\newcommand{\catEG}{\ensuremath{Ext(\catDG)}}% category of extended correct syntax diagrams
\newcommand{\catS}{\ensuremath{\Cat{Sets}}}% category of sets
\newcommand{\catC}{\ensuremath{\Cat{C}}}% category C
\newcommand{\CtoS}{${\catS}^{{\catC}^{op}}$}
\newcommand{\EtoS}{${\catS}^{{Ext(\catDG)}^{op}}$}
\begin{document}
\title[Syntax topologies]{The topology of syntax relations of a formal language}
\author{Vladimir Lapshin}
\makeatletter
\newdimen\whitespaceamount
\settowidth\whitespaceamount{~}
\def\@@and{\hskip-\whitespaceamount,~}
\makeatother

\begin{abstract}
The method of constructing of Grothendieck's topology basing on a neighbourhood grammar, defined on the category of syntax diagrams is described in the article. Syntax diagrams of a formal language are the multigraphs with nodes, signed by symbols of the language's alphabet. The neighbourhood grammar allows to select correct syntax diagrams from the set of all syntax diagrams on the given alphabet by mapping an each correct diagram to the cover consisted of the grammar's neighbourhoods. Such the cover gives rise to Grothendieck's topology on category $\catEG$ of correct syntax diagrams extended by neighbourhoods' diagrams. An each object of category $\catEG$ may be mapped to the set of meanings (abstract senses) of this syntax construction. So, the contrvariant functor from category $\catEG$ to category of sets $\catS$ is defined. The given category \EtoS likes to be seen as the convenient means to think about relations between syntax and semantic of a formal language. The sheaves of set defined on category $\catEG$ are the objects of category \EtoS that satisfy of compositionality principle defined in the semantic analysis.
\end{abstract}
\maketitle

\section{Introduction}

The formal language's syntax traditionally is described by using of the notion of grammar. The grammar defined the laws that are the base to build correct syntax constructs from atomic entities (symbols). The method, described in \cite{Lapshin}, allows to universally describe the syntax of a formal language in spite of representation of its texts (linear one or not). The method describes the syntax constructs by using of the notion of syntax diagram. The syntax diagram is the connected multigraph with nodes signed by a formal language's alphabet and ribs can belong to different sorts and represent the syntax relations. The multigraph of a syntax diagram may be directed or not. The main restriction to the multigraphs of syntax diagrams is connectivity. It is possible to select correct syntax diagrams from the set of all syntax diagrams on the given alphabet. The formalism of neighbourhood grammar is used to do this. It may to define the set of subdiagrams for each syntax diagram $D$ as the set of pairs $(D',s)$, where $D'$ is a syntax diagram and $s$~-- inclusion mapping of syntax diagram $D'$ to syntax diagram $D$. The neighbourhood an alphabet's symbol is a syntax diagram, which contains the node, signed by this symbol. This node is named as the center of the neighbourhood. The neighbourhood grammar is a finite family of neighbourhoods defined for each symbol of the alphabet. The syntax diagram is named as correct one if for each its node signed by some symbol of the alphabet it includes some neighbourhood of this symbol. Such the neighbourhood should contain all ribs adjoining to its center, the set of such the ribs is named as the neighbourhood's star. So, there is at least one cover consisted of neighbourhoods for each correct syntax diagram in the given neighbourhood grammar. Such the cover is named as the syntax one. Further, the category $\catD$ of syntax diagrams above the given alphabet will be described. Also, it will be shown how, for the given category of syntax diagrams basing on neighbourhood grammar, define the category of correct syntax diagrams and Grothendieck's topology on it.

\section{Category $\catD$ of syntax diagrams}

Define category $\catD$ of syntax diagrams above the fixed alphabet $A$ and the set of ribs' sorts $S$ as the category, where objects are syntax diagrams with nodes signed by symbol of alphabet $A$ and ribs having sorts from the set $S$. The morphisms of the category $\catD$ are inclusion mappings of diagrams to each other. Because of inclusion mapping is associative and for each diagram there is the identical inclusion mapping of the diagram to itself, $\catD$ is really the category. It makes sense to say about one-node diagram $a$ which does not contain ribs and consists of the single node, signed the given symbol $a$. Such the diagrams are the categorical interpretation of the alphabet. There is also empty diagram, whci does not contain any node and rib. The empty diagram is included to any syntax diagram. The terminal object exists only if the alphabet $A$ consists of the single symbol $a$, then it is the one-node diagram whose node signed by symbol $a$. In the contrary case, the terminal object is ``hashed'' on one-node diagrams of the alphabet's symbols.

Obviously, it does not make sense to say about the category of all syntax diagrams above the given alphabet and the given set of ribs' sorts. The universe of the discussion is too general. It is convenient to say about the category of syntax diagrams above alphabet and ribs' sorts, that satisfy some additional conditions on the structure of nodes and ribs. For example, it is convenient to say not about all diagrams above the alphabet $A=\{a,b\}$, but only that represent chains of symbols if the language of syntax diagrams has linear representation. If $ababa$~-- the chain above alphabet $A$, then it can be represented by the diagram $a \leftarrow b \leftarrow a \leftarrow  b \leftarrow a$. The conditions are: each such diagram contains only one node, having only one rib~-- outcoming, one node having only one rib~-- incoming, and all nodes contain exactly two nodes, one incoming and one outcoming. These conditions define the subcategory of category $\catD$, containing all objects of category $\catD$, that satisfy the given conditions. Often, the conditions is the single method to define the needed set of the syntax diagrams. For example, to define the category of derivation trees in Chomsky's generative context-free grammar, it is not enough definition of the family of neighbourhood diagrams, the additional conditions should be defined. Further, $\catD$ will note the subcategory of category $\catD$ above the given alphabet and sorts of ribs, which satisfies the given conditions on view of syntax relations. In this sense, the category $\catD$ is described by using of two complementary methods: globally, by defining the conditions on the view of diagrams, and locally, by defining the neighbourhoods of the symbols.

\section{Category $\catDG$ of correct syntax diagrams}

As it is already been said above, the described formalism defines the syntax of a language by using of two methods:
\begin{enumerate}
	\item Globally~-- by enumerating the conditions on view of the multigraphs of syntax diagrams of the given language.
	\item Locally~--- by enumerating the family of neighbourhoods for each symbol of the given language's alphabet.
\end{enumerate}

Such the description may be done in several steps. At the first step, the conditions on view of the syntax diagrams are defined and so, category $\catD$ of all syntax diagrams satisfy the given conditions is described. At the second step, from the set of diagrams constructed on the first step, correct syntax diagrams are selected as the diagram satisfied by local syntax characteristics of the language. The correct syntax diagram is the object of category $\catD$, for which it exists the syntax cover of neighbourhoods of the given grammar. The syntax cover is a collection of neighbourhoods given for each diagram's node. And, if the node is signed by symbol $a$, then the element of syntax cover, which is defined for this node, should belong to the family $G_a$ of neighbourhoods of symbol $a$ of the grammar $G$. Thus, the syntax cover may be noted as the list of pairs $(v,D_a)$, where $v$~-- the node of the diagram and $D_a$~-- the neighbourhood of symbol $a$, which signs the node $v$. The category $\catDG$ of correct syntax diagrams of category $\catD$ is the category of pairs $(D,P)$, where $D \in Ob(\catD)$~-- syntax diagram and $P$~-- its syntax cover. The morphisms of category $\catDG$ are the inclusion mappings of diagrams, satisfying the condition that for each node of the subdiagram its neighbourhood, as element of the syntax cover, should be the element of syntax cover of the enveloping diagram. So that, the neighbourhoods should be identically mapped as elements of syntax covers. Obviously, this is the general case, but there may be the exceptions. It can be when there is the correct syntax diagram, which has two different syntax covers (ambiguity). To correctly say about diagrams of such the kind, it is needed to think about a pair (diagram, syntax cover) as about the single object what has been done above. Go to the formal definitions.

\begin{definition}\label{syn_cat}
	Let $G=\{G_a : a \in A\}$~-- neighbourhood grammar defined on category $\catD$. Define category $\catDG$ of correct syntax diagrams, given by the grammar $G$ on the category $\catD$, as follows:
\begin{itemize}
	\item Objects of category $\catDG$ are the pairs $(D,P)$, where $D \in Ob(\catD)$~-- syntax diagram and $P$~-- its syntax cover. $P$ is the list of pairs $(v,D_a)$, where $v$~-- node of the syntax diagram and $D_a$~-- neighbourhood of symbol $a$, which signs the node $v$, an each node $v$ is in the list $P$ exactly on one occasion.
	\item For two correct syntax diagrams $(A,P^A),(B,P^B) \in Ob(\catD)$ the set $Hom_{\catDG}((A,P^A),(B,P^B))$ consists of all inclusion mappings $s : A \rightarrow B$ such that for each node $v$ of diagram $A$ neighbourhood of node $s(v)$ in cover $P^B$ is the neighbourhood of node $v$ in cover $P^A$.
\end{itemize}
\end{definition}

It is clear the definition \ref{syn_cat} really define the category. The identity map of such category is the identity inclusion map of a correct syntax diagram to itself.

The category $\catDG$ defines correct syntax diagrams, but further we'll need the extension of this by neighbourhood diagrams. Name this new category as $Ext(\catDG)$, but often will also name it the category of correct syntax diagram.

\begin{definition}\label{ext_syn_cat}
	Let $G=\{G_a : a \in A\}$~-- neighbourhood grammar defined on category $\catD$ è $\catDG$~-- category of correct syntax diagrams, defined by the grammar $G$ on category $\catD$. Define extension $Ext(\catDG)$ of category $\catDG$ as follows:
\begin{itemize}
	\item Objects of category $Ext(\catDG)$ are the objects of category $\catDG$, and also all neighbourhood diagrams of grammar $G$.
	\item Let $A$ and $B$~-- objects of category $Ext(\catDG)$. The set of maps $Hom_{Ext(\catDG)}(A,B)$ is defined by follows:
		\begin{enumerate}
			\item If $A$ and $B$~-- correct syntax diagrams, i.e. $A,B \in Ob(Ext(\catDG))$, then $Hom_{Ext(\catDG)}(A,B) = Hom_{\catDG}(A,B)$.
			\item If $A$~-- some neighbourhood $D_a \in G_a$, and $B \in Ob(\catDG)$, then $Hom_{Ext(\catDG)}(A,B)$ consists of inclusion mappings of elements $(v,D_a) \in P$, where $P$~-- syntax cover of diagram $B$.
			\item If $A \in Ob(\catDG)$, and $B$~-- some neighbourhood $D_a \in G_a$, then $Hom_{Ext(\catDG)}(A,B) = \oslash$.
			\item If $A$ and $B$~-- some neighbourhood diagrams, then if $A$ and $B$ be the same neighbourhood $D_a \in G_a$, then $Hom_{Ext(\catDG)}(A,B) = 1_{D_a}$, where $1_{D_a}$~-- identical inclusion mapping of neighbourhood diagram $D_a$ to itself. In the contrary case $Hom_{Ext(\catDG)}(A,B) = \oslash$.
		\end{enumerate}
\end{itemize}
\end{definition}

\begin{proposition}
	$Ext(\catDG)$ is a category.
\end{proposition}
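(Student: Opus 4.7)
The plan is to verify the three category axioms in turn: existence of identities, well-definedness (closure) of composition, and associativity. Since the definition of $Hom_{Ext(\catDG)}$ is given by a case split on whether the source and target are correct syntax diagrams or neighbourhood diagrams, each axiom will be checked by working through the relevant cases.

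For identities, the object $(D,P) \in Ob(\catDG)$ has identity inherited from $\catDG$, while each neighbourhood $D_a$ has identity $1_{D_a}$ postulated directly in clause (4) of the definition. To confirm these are two-sided identities one just needs to check that composing with the inclusion $(v,D_a) \hookrightarrow B$ of clause (2) on either side gives back the same morphism, which is immediate from the fact that all morphisms in $Ext(\catDG)$ are (restrictions of) inclusion mappings of underlying multigraphs.

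For composition, the only non-trivial case is the one where $D_a$ is a neighbourhood, $f \in Hom_{Ext(\catDG)}(D_a,B)$ is the inclusion associated with some pair $(v,D_a) \in P^B$, and $g \in Hom_{Ext(\catDG)}(B,C) = Hom_{\catDG}(B,C)$. I would argue that $g \circ f$ is precisely the inclusion associated with the pair $(g(v), D_a) \in P^C$: indeed, by Definition~\ref{syn_cat}, a $\catDG$-morphism $g$ sends each node $v$ of $B$ to a node $g(v)$ of $C$ whose neighbourhood in $P^C$ equals the neighbourhood of $v$ in $P^B$, so $(g(v),D_a) \in P^C$ and the composition lies in $Hom_{Ext(\catDG)}(D_a,C)$, as required by clause (2). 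All other compositions are either internal to $\catDG$ (and so well-defined there), involve an empty hom-set (clause (3)) and are vacuously closed, or involve only identities on neighbourhoods (clause (4)). The main obstacle is exactly this verification that the ``coherence'' condition imposed on $\catDG$-morphisms is what makes the mixed composition in clause (2) land back in the prescribed form; everything else is bookkeeping.

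Finally, associativity holds because every morphism in $Ext(\catDG)$ is an inclusion of underlying multigraphs, and composition is ordinary composition of such inclusions, which is associative in $\catD$. Since the extra data (choice of syntax cover or choice of centred neighbourhood) is preserved under composition by the argument above, associativity in $Ext(\catDG)$ reduces to associativity in $\catD$. This completes the proof plan.
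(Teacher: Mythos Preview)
Your proposal is correct and follows essentially the same approach as the paper: verify identities, check closure of composition in the mixed case $D_a \to (D,P) \to (D',P')$ by using the neighbourhood-preservation condition on $\catDG$-morphisms, and observe that associativity is inherited from composition of inclusion maps. Your write-up is in fact more explicit than the paper's own proof, which asserts the existence of the composite $s'\circ s$ and the associativity with less detail; the key observation that $(g(v),D_a)\in P^C$ because of Definition~\ref{syn_cat} is exactly the point the paper is gesturing at.
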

\begin{proof}
	Indeed, it is enough to show, that enumerated in the definition \ref{ext_syn_cat} morphisms satisfy the axioms of category. By definition, an each object of category $Ext(\catDG)$ has the identical map, it is the inclusion mapping of the diagram to itself. Let $D_a$ be some neighbourhood diagram. It is the object of category $Ext(\catDG)$. There exists only one map with the end on object $D_a$~-- the identical inclusion one. If $s$ is some morphism from $D_a$ to correct diagram $(D,P)$ and $s'$~-- map from diagram $(D,P)$ to correct diagram $(D',P')$, then clear, there exists composition $s' \circ s$ selecting neighbourhood $D_a$ of some node $v$ signed by symbol $a$ in diagram $D'$ by such the way that $D_a$ is the element of syntax cover $P'$. Associativity, clear, is also true. It is interesting, that if there exists some diagram of neighbourhood $D_a$, which is itself the correct diagram $(D_a,P)$ as well, then there different objects for neighbourhood diagram $D_a$ and correct diagram $(D_a,P)$ in category $Ext(\catDG)$.
\end{proof}

\section{Syntax topologies}

Topological space $(X,T)$, defined on set $X$ by topology $T$, may be viewed as the method of selection of open subsets from the set of all subsets of set $X$. An each open subset of $X$ has the cover consisting of open subsets of $X$. The same situation is in the category of syntax diagrams above the given alphabet and satisfying by the given conditions on view of diagrams. But, some neighbourhood grammar is used to select correct diagrams instead the topology. Thus, there is the idea to define a neighbourhood grammar as the topology of a special kind, defined on the category of correct syntax diagrams. But, we'll use extended category $Ext(\catDG)$ as the base for the definition.

Recall that sieve on object $A$ of category $\catC$ is the family of maps $S=\{f : Cod(f)=A\}$ satisfying by following condition: if $f \in S$ and $h : B \rightarrow Dom(f)$, then $fh \in S$. In category $Ext(\catDG)$ each morphism on some object $D$ defined either the correct subdiagram of the object $D$, or the neighbourhood diagram as element of its syntax cover. Because, sieve in category $Ext(\catDG)$ is just the set of correct and neighbourhood subdiagrams of the given diagram, closed by operation of subdiagram's getting from each its object. For example, the sieve may consists of some subdiagram of given diagram and all possible correct and neighbourhood subdiagrams of this subdiagram. These subdiagrams are also the subdiagrams of the given diagram, so the set closed and. clear, is the sieve.

There may be defined Grothendieck's topology on any small category. Recall the definition (\cite{Johnston} def. 0.32).

\begin{definition}
Grothendieck's topology on small category $\catC$ is the function $J$, which maps each object $A$ of category $\catC$ to family $J(A)$ of sieves on the object and satisfying by following conditions:
\begin{enumerate}
   \item Maximal sieve $h_A=\{f : Cod(f)=A\}$ belongs to $J(A)$.
   \item (Stability axiom) If $S \in J(A)$ and $h : B \rightarrow A$, then sieve $h^*(S)=\{f: Cod(f)=B, hf \in S\}$ belongs to $J(B)$.
   \item (Transitivity axiom) If $S \in J(A)$ and $R$~-- any sieve on $A$, such that $h^*(R) \in J(B)$ for all $B \stackrel{h}{\rightarrow} A \in S$, then $R \in J(A)$.
\end{enumerate}
Small category Grothendieck's topology with  $J$ named as site.
\end{definition}

The sieves from the families $J$ are named as $J$-covers. Obviously, if a category has fibered products, then Grothendieck's topology is defined by using of so named base~-- the families that give rise to Grothendieck's topology. In our case, it is also possible to define fibered products for objects of category $Ext(\catDG)$, but this does not make big sense in linguistic interpretation. Because the Grothendieck's topology for category $Ext(\catDG)$ will be defined by using of another definition of base~-- for categories, that have not fibered products. Such the base is defined in \cite{MacLane92} p. 156, ex. 3.

\begin{definition}\label{top_base}
Let $\textbf{C}$ be a small category. Define base of Grothendieck's topology on category $\textbf{C}$ as function $K$, which maps each object $A$ of category $\textbf{C}$ to set of morphisms' families having the end on object $A$ (covering $K$-families), satisfied by following conditions:
\begin{enumerate}
   \item If $f : B \rightarrow A$~-- isomorphism, then family $\{f : B \rightarrow A\}$ belongs to $K(A)$.
   \item (Stability axiom) If $\{f_i : A_i \rightarrow A \: |\: i \in I\} \in K(A)$, then for each morphism $g : B \rightarrow A$ there exists covering $K$-family $\{h_j : B_j \rightarrow B \: |\: j \in I'\} \in K(B)$, such that for each $j$ exists $f_i$, such that $f_i = g \circ h_j$.
   \item (Transitivity axiom) If $\{f_i : A_i \rightarrow A \: |\: i \in I\} \in K(A)$ and if for each $i \in I$ there exists family $\{g_{ij} : B_{ij} \rightarrow A_i \: |\: j \in I_i\} \in K(A_i)$, then $\{f_i \circ g_{ij} : B_{ij} \rightarrow A \: |\: i \in I, j \in I_i\} \in K(A)$.
\end{enumerate}
\end{definition}

Now define the base of Grothendieck's topology on category $Ext(\catDG)$.

\begin{definition}\label{syn_base}
Let $\catD=\{A,S,C\}$ be a category of syntax diagrams, $G=\{G_a : a \in A\}$~-- neighbourhood grammar and $Ext(\catDG)$~-- category of correct syntax diagrams. Define the base of Grothendieck's topology as function $K_G$, which maps each object $D$ of category $Ext(\catDG)$ and is satisfied by following conditions:
\begin{enumerate}
   \item Family $\{D_I\}$, where $D_I$ is isomorphism, belongs to $K_G(D)$.
   \item If $D$~-- correct diagram $(D,P)$, then family of morphisms $\{f_v : D_v \rightarrow D\}$, where $D_v \in P$for each node $v$ of diagram $D$, belongs to $K_G(D)$.
\end{enumerate}
\end{definition}

\begin{proposition}
	Function $K_G$, defined in \ref{syn_base}, is the base of some Grothendieck's topology on category $Ext(\catDG)$.
\end{proposition}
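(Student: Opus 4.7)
The plan is to verify the three axioms of Definition \ref{top_base} for $K_G$, proceeding by case analysis on whether the objects involved are correct syntax diagrams or neighbourhood diagrams. The backbone of the argument is the observation that, by clauses 3--4 of Definition \ref{ext_syn_cat}, the only morphism with codomain a neighbourhood diagram $D_a$ is the identity $1_{D_a}$; in particular $K_G(D_a)$ contains only the trivial singleton iso family. Axiom 1 of Definition \ref{top_base} is immediate from clause 1 of Definition \ref{syn_base}.

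For the stability axiom, I fix $\{f_i: A_i \to A\} \in K_G(A)$ together with a morphism $g: B \to A$. If $\{f_i\}$ is the trivial iso family from clause 1 of Definition \ref{syn_base}, then the identity family on $B$ suffices. The substantive case is when $A = (A, P^A)$ is a correct diagram and $\{f_i\}$ is the clause-2 family $\{f_v: D_v \to A\}$ indexed by nodes of $A$. If $B = D_a$ is a neighbourhood, clause 2 of Definition \ref{ext_syn_cat} forces $g$ to be the inclusion of some element $(v_0, D_a) \in P^A$, so $g = f_{v_0}$ and the identity family on $D_a$ works. If $B = (B, P^B)$ is itself correct, I invoke the morphism condition of Definition \ref{syn_cat}: for every node $v$ of $B$, the neighbourhood of $v$ in $P^B$ coincides with the neighbourhood of $g(v)$ in $P^A$. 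Consequently the clause-2 family $\{h_v: D_v \to B\}$ belongs to $K_G(B)$, and for each $v$ the composite $g \circ h_v$ is an inclusion of $D_v$ into $A$ centred at $g(v)$, which is exactly $f_{g(v)}$.

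For the transitivity axiom, if $A$ is a neighbourhood the only available covering families are trivial iso families, so the composed family is again trivial and lies in $K_G(A)$. If $A = (A, P^A)$ is correct with clause-2 family $\{f_v: D_v \to A\}$, then each $D_v$ is a neighbourhood, so each refining family $\{g_{v,k}\} \in K_G(D_v)$ is again a trivial iso family. The composite family $\{f_v \circ g_{v,k}\}$ therefore differs from the clause-2 family of $A$ only by reindexing through identities, and so belongs to $K_G(A)$.

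The only genuinely subtle step is the stability subcase where both $A$ and $B$ are correct diagrams: everything hinges on unpacking Definition \ref{syn_cat} to see that an inclusion $g: B \to A$ in $\catDG$ automatically ``restricts'' the cover of $A$ to the cover of $B$ node by node, and this is exactly what places the clause-2 family on $B$ into $K_G(B)$. All other verifications reduce to bookkeeping around the observation about neighbourhood diagrams made in the first paragraph, and hence present no obstacle.
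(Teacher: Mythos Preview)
Your proposal is correct and follows essentially the same strategy as the paper: stability is obtained by pulling back to the syntax cover of the domain object (with the cover-preservation clause of Definition~\ref{syn_cat} doing the real work), and transitivity collapses because neighbourhood diagrams admit only the trivial iso covering family. Your case analysis is in fact more explicit than the paper's sketch---notably your separate treatment of the stability subcase where $B$ is a neighbourhood diagram, which the paper leaves implicit.
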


\begin{proof}
	The first axiom of base of topology definition clear is true for any object of category $Ext(\catDG)$. Show that two other axioms are also true.
	
	{\it Stability axiom.} Let $D$ be an object of category $Ext(\catDG)$, which has nontrivial covering family $K_G(D)$ (i.e. syntax cover). It's clear that a correct syntax diagram $(D,P)$ and family of inclusion maps of syntax cover $P$ on diagram $D$ is element of $K_G(D)$. If $s : D' \rightarrow D$~-- inclusion map of some diagram $D' \in Ob(Ext(\catDG))$ in diagram $D$, then there exists syntax cover $P'$ of diagram $D'$, which defines the object (the pair) $D'$ of category $Ext(\catDG)$. The inclusion maps of neighbourhood diagrams of syntax cover $P'$ are the family on $K_G(D')$. The composition of these maps with map $s$ gives exactly the needed elements of covering family $K_G(D)$.
	
	{\it Transitivity axiom.} If $K_G(D)$~-- covering family of an object $D$ of category $Ext(\catDG)$, then there is only possible covering family on each object $D$, it is trivial one (elements of syntax cover are neighbourhoods). The family of compositions of elements of trivial covers (identical maps of neighbourhoods diagrams) and inclusion mappings of each neighbourhood diagram in object $D$ is the same covering family, i.e. an element of $K_G(D)$.
\end{proof}

The function $K_G$ may be transformed to Grothendieck's topology $J$ on category $Ext(\catDG)$ by the standard way. It is enough to get all possible complements of inclusion maps of covering families of function $K_G(D)$ for each object $D \in Ext(\catDG)$. An each trivial family becames the maximal sieve on object $D$ and an each syntax cover stays the same.

\begin{definition}\label{syn_top}
Let $\catD=\{A,S,C\}$ be a category of syntax diagrams, $G=\{G_a : a \in A\}$~-- neighbourhood grammar and $Ext(\catDG)$~-- the category of correct syntax diagram on category $\catD$. Syntax topology $J_G$ based on neighbourhood grammar $G$ is the Grothendieck's topology, defined on category $Ext(\catDG)$ by the following way:
\begin{itemize}
	\item For an each object $D$ of category $\catDG$ $J_G(D)$ contains maximal sieve on object $D$.
	\item If $(D,P) \in Ext(\catDG)$~-- correct diagram, then the family of morphisms of elements of the given syntax cover $P$ belongs to $J_G(D)$.
\end{itemize}
\end{definition}

\section{Category \EtoS and sheaves, defined by syntax topologies}

It may be possible to map an each object of category $Ext(\catDG)$ to the set of some its senses (abstract meanings). The abstraction is that one does not interesting what is the concrete element of such the set, but there takes in account that this meaning exists. Even if the syntax diagram does not have any practical sense, such the set can be mapped~-- it is the empty set. The set of meanings may contain potentially infinite number of elements. Because it makes sense to use object of category $\catS$ as images of the given map.

The map of each object of category $Ext(\catDG)$ to some set of its meanings burns the contrvariant functor (name this as $F$) from category $Ext(\catDG)$ to category of sets $\catS$. Indeed, mapping $F$ is defined on each object of category $Ext(\catDG)$. For each morphism $s : D' \rightarrow D$ objects $D',D \in Ob(Ext(\catDG))$ there is map $F(f) : F(D) \rightarrow F(D')$ of accorded sets of senses, which maps each meaning $m \in F(D)$ of diagram $D$ to the meaning $m' \in F(D')$ of subdiagram $D'$, exactly the sense, which derived by subdiagram $D'$ from the diagram $D$ and meaning $m$. If $D \in Ob(Ext(\catDG))$ and $1_D$ is identical map, then the map $1_{F(D)}$ is clear defined as identical map on the set $F(D)$. It is also not hard to see that $F$ is inversely transitive functor. Thus, it is proven that $F$ is functor $F : {Ext(\catDG)}^{op} \rightarrow \catS$. The contrvariant functor from any category to category of sets is named also as subsheaf of sets. So, $F$ is subsheaf of sets on category $Ext(\catDG)$.

An each subsheaf of sets on category $Ext(\catDG)$ may be viewed as the language. So, define the category of languages defined by the given grammar $G$ as category \EtoS of contrvariant functors from $Ext(\catDG)$ to category of sets $\catS$. The objects of the category are subsheaves $F : Ext(\catDG) \rightarrow \catS$ (i.e. languages defined by that grammar) and morphisms are natural transformations of the languages. Let take a look on the properties of the category.

As it is known, (\cite{MacLane92} chapter 1) category \CtoS of subsheaf of sets on each locally small category $\catC$ (in particular, the category \EtoS) is topos, and so:
\begin{itemize}
	\item Finitely full and finitely cofull.
	\item Has exponential of any two subsheaves.
	\item Has the subobjects classifier $1 \stackrel{true}{\rightarrow} \Omega$.
\end{itemize}

Consider what is the subobjects classifier $1 \stackrel{true}{\rightarrow} \Omega$ on category \EtoS. According to (\cite{MacLane92} p. 38) subobjects classifier on category subsheaves of sets \CtoS is constructed by following way: an each object $A$ of category $\catC$ mapped to set $\Omega(A)=\{S : S \text{"--- sieve on } A\}$ of all sieves on object $A$. An each morphism $f : A \rightarrow B$ is mapped to the morphism $\Omega(f) : \Omega(B) \rightarrow \Omega(A)$, which maps each sieve $S_B \in \Omega(B)$ on object $B$ to sieve $S_A  \in \Omega(A)$ on object $A$ by getting the inverse image of morphism $f$,~-- the set $\Omega(f)(S_B)=\{h : hf \in S_B\}$. Thus, $\Omega(f)(S_B)$ selects that set of subdiagrams of diagram $A$, which are subdiagrams of diagram $B$ and belongs to $S_B$. Functor $\Omega$ classifies subfunctor $S$ of functor $F$ by the following way. Let $m \in F(B)$, there may be two cases:
\begin{enumerate}
	\item $m \in S(B)$.
	\item $m \notin S(B)$.
\end{enumerate}
The first case means that sense $m$, defined on syntax construct $B$ by functor $F$ is the same as the sense on $B$ given by its subfunctor $S$. In the case, natural transformation ${\chi}^{F}_{S}: F(B) \rightarrow \Omega(B)$ maps element $m$ to maximal sieve $h_B$ on $B$, that means the sense, given the syntax construction by functor $F$, is the same as the sense that gives the subfunctor $S$ on all subdiagrams of $B$. In the second case, there is some meaning of syntax construction $B$, given by functor $F$, which does not give by functor $S$. Then ${\chi}^{F}_{S}(B)(m)$ defines some sieve on $B$, which really is just a maximal subdiagram $A$ of diagram $B$, on which the sense that derived from the sense $m$ is equal to some meaning on $S(B)$. So, the mean of subobjects classifier $\Omega(B)$ in category \EtoS is to select the syntax subconstructions on the given diagram $B$, where the senses given by functor and subfunctor are the same. 

It is interesting what are initial and terminal objects in category \EtoS. An initial object maps an each syntax construct to empty set of senses. So, it can be seen as really formal language where any syntax construction does not have any meaning. A terminal object maps an each syntax construct to the set consisting of exactly one sense. Such the functor can be interpreted as unambiguity language. The unambiguity language amy be used to select meanings in other languages.

It is needed to define the cases the subsheaves on category $Ext(\catDG)$ are sheaves. The sheaves may be interpreted as languages that satisfy \textit{compositionality principle}, which in our terms can be formulated by the following way:

\begin{definition}
An each sense of the correct syntax diagram is uniquely defined by the senses of all its syntax subconstructions.
\end{definition}

Recall the sheaf definition on Grothendieck's topology (\cite{MacLane92} p. 122):
\begin{definition}\label{sheaf}
Let $(\catC,J)$ be a site. Subsheaf $F : {\catC}^{op} \rightarrow \catS$ is sheaf, if for each object $A \in Ob(\catC)$ and for each sieve $S \in J(A)$ diagram

$$\xymatrix{F(A) \ar[r]^(.35){e} & \prod\limits_{f \in S} F(Dom(f)) \ar@< 3pt>[r]^p \ar@<-3pt>[r]_a & \prod\limits_{f,g} F(Dom(g))}$$

where $Dom(f)=Cod(g)$, and map $e$ is equalizer of $p$ and $a$. Maps $p$ and $a$ are defined as follows:
\begin{itemize}
	\item $e={\{x \cdot f\}}_f={\{F(f)(x)\}}_f$. So that, for each $x \in F(A)$ selected the element of product $\prod\limits_{f \in S} F(Dom(f))$, consisted of images $F(f)(x)$.
	\item If ${\bf x}={\{x_f\}}_{f \in S}$~-- element of product $\prod\limits_{f \in S} F(Dom(f))$, then ${p({\bf x})}_{f,g}=x_{fg}$ and ${a({\bf x})}_{f,g}=x_f \cdot g$. So that, the map $p$ is defined via images of functor $F$ on compositions $fg$, and map $a$~-- via action of functor $F$, defined by morphism $g$ on elements $x_f$.
\end{itemize}
\end{definition}

In our interpretation, the product $\prod\limits_{f \in S} F(Dom(f))$ is just the collection of senses, selected from each subdiagram $A$, and $e$ maps each sense on diagram $A$ to collection of senses on its subdiagrams. The mapping is defined by subsheaf $F$. And also there is the condition that selected collection of senses, defined on subdiagrams, should be agreed in the sense, that if some meaning $n$ on diagram $A$ is mapped by subsheaf $F$ to meaning $m$ on subdiagram $D$, and meaning $m$ is mapped to meaning $k$ on subdiagram $D'$ of diagram $D$, then meaning $k$ uniquely mapped by functor $F$ to meaning $n$. Subsheaf $F$ is sheaf, if an each family of senses agreed on sieve $S$ has uniquely defined sense on diagram $A$ for each sieve $S \in J(A)$. As $e$ is equalizer, each sense on diagram $A$ uniquely glued from some agreed family on its subdiagrams belong to each sieve $S \in J(A)$.

In Grothendieck's topology, which is defined by some neighbourhood grammar, there are maximum two sieves on each object: maximum sieve and syntax cover for correct diagram. But, to subsheaf to be a sheave, it is enough to each family of senses agreed on syntax cover of a diagram is glued to the uniquely defined sense on this diagram. So, to understand the given subsheaf is sheaf sufficiently only to make sure that each meanings family on the neighbourhood uniquely glued to the sense on the diagram. Indeed, if $D$ is a correct syntax diagram and $D'$~-- its correct subdiagram, then each sense on $D'$ uniquely glued from senses on its neighbourhoods. So, to $F$ be a sheaf it's needed this sense should be mapped to that meaning on diagram $D$, which glued from meanings on its neighbourhoods. So, it is true for following:

\begin{proposition}\label{syn_sheaf}
Let $\catD=\{A,S,C\}$ be a category of syntax diagrams, $G=\{G_a : a \in A\}$~-- neighbourhood grammar, $Ext(\catDG)$~-- category of correct syntax diagrams on category $\catD$ and $J_G$~-- syntax topology defined by grammar $G$. Subsheaf of senses $F$ on category $Ext(\catDG)$ is sheaf if and only if each sense on correct syntax diagram $D$ is uniquely defined by each agreed family of senses on elements of its syntax cover.
\end{proposition}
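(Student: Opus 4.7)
The plan is to exploit the extreme simplicity of the syntax topology $J_G$: by Definition \ref{syn_top}, the only sieves it contains on an object $D$ are the maximal sieve $h_D$ and --- if $D=(D,P)$ is a correct diagram --- the sieve $S$ generated by the syntax cover. So I would verify the sheaf condition of Definition \ref{sheaf} separately on these two kinds of sieves. For the maximal sieve the condition is automatic, since $1_D \in h_D$ makes $e$ split by evaluation at $1_D$ and the equalizer collapses; this settles every object of $Ext(\catDG)$, including the neighbourhood objects, on which $h_D$ is the only sieve in $J_G(D)$.

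For a correct diagram $(D,P)$ the substantive work is to identify $S$ concretely and read off the equalizer diagram. I would argue that $S$ consists of exactly the inclusions $f_v : D_v \to D$ of the neighbourhoods of the cover: closing $\{f_v\}$ on the right requires all $h : B \to D_v$, but by cases (3) and (4) of Definition \ref{ext_syn_cat} the only such $h$ in $Ext(\catDG)$ is $1_{D_v}$, so no further morphisms arise. Consequently the second product $\prod_{f,g} F(Dom(g))$ of Definition \ref{sheaf} is indexed only by pairs $(f_v, 1_{D_v})$; both maps $p$ and $a$ restrict to the identity on $\prod_v F(D_v)$, and the equalizer condition degenerates to the single requirement that the map $e : F(D) \to \prod_v F(D_v)$, $x \mapsto \{F(f_v)(x)\}_v$, be a bijection. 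This is word-for-word the statement that each family of senses on the syntax cover determines a unique sense on $D$, which is the compositionality principle of the proposition.

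The iff then falls out: sheafhood forces bijectivity of $e$ on every correct diagram (compositionality holds), and conversely bijectivity of $e$ together with the trivial case of the maximal sieve exhausts all sieves in $J_G$, so $F$ is a sheaf. The main obstacle I anticipate is the bookkeeping for the equalizer diagram --- specifically, verifying that the compatibility condition is vacuous in $Ext(\catDG)$ (so the informal ``agreed'' of the proposition matches the formal condition $p(\mathbf{x}) = a(\mathbf{x})$), which rests on the fact that neighbourhood objects admit no nontrivial morphisms into them. A smaller subtlety is making clear that nothing is lost by not checking the sheaf condition separately on correct proper subdiagrams $D' \subset D$, since each such $D'$ is itself an object of $Ext(\catDG)$ with its own syntax cover and therefore is handled by another instance of the same statement.
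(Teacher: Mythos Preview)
Your approach is essentially the paper's: it too observes that $J_G$ carries at most two covering sieves per object, that the maximal sieve imposes no condition, and that the syntax cover is therefore the only place where the sheaf axiom bites; the paper's argument is the informal paragraph immediately preceding the proposition. Your write-up is in fact more precise than the paper's, since you explicitly note (using cases (3)--(4) of Definition~\ref{ext_syn_cat}) that neighbourhood objects admit no nontrivial incoming morphisms, which is exactly what makes the sieve generated by $\{f_v\}$ equal to $\{f_v\}$ and forces $p=a$, collapsing ``agreed family'' to ``arbitrary family''.
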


It is clear that the definition of sheaf on syntax topology is exactly the reformalize of compositionality principle. And more, the definition  \ref{syn_sheaf} makes the compositionality principle ``local'', reducing its conditions to be true only on syntax covers.

The subobjects classifier $\Omega$ in category of sheaves of senses on category $Ext(\catDG)$ as usually maps an each object to set of closed in the given syntax topology sieves on the given object. Sieve $S$ on object $A$ is named as closed in Grothendieck's topology $J$, if for each morphism $f : Cod(f) = C$, if $f^*(S) \in J(Dom(f))$, then $f \in S$. So that, if set of morphisms $g$, whose compositions $fg$ with morphism $f$ belong to sieve $S$ are the covering family in $J$, then morphism $f$ also should belong to sieve $S$. In category $Ext(\catDG)$ a sieve is closed if together with each subdiagram it contains and all possible syntax covers of this subdiagram. This is the analog of principal sieve in topology on the sets. The classification of sheaf is doing obviously: an each element $x \in F(C)$ is mapped to set of morphisms $\{f | Cod(f)=C \text{ and } x \cdot f \in P(Dom(f))\}$. This set is the sieve and it is closed one if $P$ is sheaf.

Category \EtoS is an elementary topos (\cite{MacLane92} prop. 4 p. 143). In particularly, \EtoS contains all finite limit and colimits. For example, category \EtoS contains both formal and ambiguity languages, that are, accordingly, initial and terminal objects in this category.

\section{Conclusion}

Category \EtoS likes to see a convenient mathematical tool for studying both syntax and semantic properties of languages. The given in the article connection between neighbourhood grammars and Grothendieck's topologies allows to apply methods of topology and category theory to study the languages. The problems are needed to be study in future like to be follows:
\begin{itemize}
	\item Study the cases when it is possible to define neighbourhood grammars basing on an arbitrary Grothendieck's topology on site $({\bf C},J)$.
	\item Analyse the syntax complexity of languages basing on the geometrical complexity of theirs syntax diagrams.
	\item Study in more details the relations between languages defined by the given neighbourhood grammar as well as the relations between categories of sheaves of senses defined by different neighbourhood grammars.
\end{itemize}

\end{document}